\newtheorem{thm}{Theorem}[section]
\newtheorem{lem}[thm]{Lemma}
\newtheorem{cor}[thm]{Corollary}
\newtheorem{prop}[thm]{Proposition}
\theoremstyle{definition}
\newtheorem{defin}[thm]{Definition}
\newtheorem{rmk}[thm]{Remark}
\DeclareMathOperator*{\diam}{diam}
\DeclareMathOperator{\spt}{spt}
\renewcommand{\epsilon}{\varepsilon}
\newcommand{\R}{\mathbb{R}}
\newcommand{\dist}{d}
\DeclareMathOperator*{\affdim}{affdim}
\newcommand{\Leb}[1]{\lvert {#1}\rvert_{\mathcal{L}}}
\newcommand{\norm}[1]{\lvert {#1}\rvert}
\def\Omegabar{\bar{\Omega}}
\def\xbar{\bar{x}}
\def\Dbar{\bar{D}}
\def\p{p}
\def\pbar{\bar{p}}
\def\mountain{m}
\def\arbitrary{{A}}
\def\sublevelset{S}
\DeclareMathOperator{\cl}{cl}
\DeclareMathOperator{\interior}{int}
\def\M{M}
\def\Mbar{\bar{M}}
\newcommand{\coord}[2]{\left[#1\right]_{#2}}
\newcommand{\targetdomcoord}[1]{\coord{\targetdom}{#1}}
\newcommand{\cExp}[2]{exp^c_{#1}({#2})}
\newcommand{\MTWcoord}[4]{-(c_{#1#2, {p}{q}}-c_{#1#2, {r}}c^{{r}, s}c_{s, {p}{q}})c^{{p}, #3}c^{{q}, #4}}
\def\subzero{\sublevelset_0}
\def\subzerocoord{\coord{\subzero}{\xbar_0}}
\newcommand{\euclidean}[2]{\langle{#1},{#2}\rangle}
\newcommand{\subdiff}[2]{\partial{#1}\left(#2\right)}
\newcommand{\subdiffbdry}[2]{\partial{#1}\left(#2\right)^{\partial}}
\newcommand{\subdiffint}[2]{\partial{#1}\left(#2\right)^{\interior}}
\newcommand{\csubdiff}[2]{\partial_c{#1}(#2)}
\newcommand{\csubdiffint}[2]{\partial_c{#1}\left(#2\right)^{\interior}}
\def\target{T}
\def\gradu{D u}
\def\targetbdry{\left(\spt{\nu}\right)^\partial}
\def\source{\spt{\mu}}
\def\target{\spt{\nu}}
\def\sourcemeas{\mu}
\def\targetmeas{\nu}
\def\sourceint{\left(\spt{\mu}\right)^{\interior}}
\def\targetint{\left(\spt{\nu}\right)^{\interior}}
\newcommand{\inner}[2]{\langle #1, #2\rangle}
\DeclareMathOperator{\domain}{dom}
\newcommand{\dom}[1]{\domain\left(#1\right)}
\newcommand{\domint}[1]{\left(\domain\left(#1\right)\right)^{\interior}}
\newcommand\nbhd[2]{\mathcal{N}_{#1}\left(#2\right)}
\def\bdry{\partial}
\def\hole{\mathcal{O}}
\def\holebdry{\hole^{\partial}}
\newcommand{\paren}[1]{\left(#1\right)}
\newcommand{\curly}[1]{\left\{#1\right\}}
\def\sourcedom{\Omega}
\def\sourcedombdry{\sourcedom^{\partial}}
\def\targetdom{\Omegabar}
\def\targetdomint{\targetdom^{\interior}}
\def\targetdombdry{\targetdom^{\partial}}
\newcommand{\csubdiffcoord}[2]{\coord{\csubdiff{#1}{#2}}{#2}}
\def\nbhd{\mathcal{N}}
\def\Breniermap{T}
\DeclareMathOperator{\dVol}{dVol}
\newcommand{\cotanspM}[1]{T^{\ast}_{#1}\M}
\newcommand{\tanspM}[1]{T_{#1}\M}
\newcommand{\cotanspMbar}[1]{T^{\ast}_{#1}\Mbar}
\newcommand{\tanspMbar}[1]{T_{#1}\Mbar}
\def\arbitrarybar{\bar A}
\def\subzeroint{\sublevelset_0^{\interior}}
\def\subzerobdry{\sublevelset_0^{\partial}}
\newcommand{\plane}[1]{\Pi^{#1}}
\def\lineseg{\ell_{\plane{\pm}}}
\newcommand{\ball}[2]{B_{#1}\paren{#2}}
\newcommand{\ccone}[1][\subzero]{K^c_{x_0, #1}}
\newcommand{\tansp}[2]{T_{#1}{#2}}
\begin{document}
\author{Young-Heon Kim}
\address{Department of Mathematics\\ University of British Columbia\\ Vancouver, V6T 1Z2 Canada}
\email{yhkim@math.ubc.ca}

\author{Jun Kitagawa}
\address{Department of Mathematics\\ University of Toronto\\
Toronto, M5S 2E4 Canada}
\email{kitagawa@math.toronto.edu}
\title{Prohibiting isolated singularities in optimal transport}
\subjclass[2010]{35J96}
\thanks{Y.-H.Kim's research is partially supported by 
Natural Sciences and Engineering Research Council of Canada Discovery Grants 371642-09 as well as the  Alfred P. Sloan Research Fellowship 2012--2014.
Both of the authors were also supported by the Mathematical Sciences Research Institute (MSRI) as members in the thematic program ``Optimal Transport: Geometry and Dynamics".
Part of this work was done while Y.-H.Kim was visiting University Paris-Est
Cr\'eteil (UPEC) and Korea Advanced Institute of Science and
Technology (KAIST). \\
\copyright 2014 by the authors.
}

\begin{abstract}
We give natural topological conditions on the support of the target measure under which solutions to the optimal transport problem with cost function satisfying the (weak) Ma, Trudinger, and Wang condition cannot have any isolated singular points.
\end{abstract}
\maketitle

\section{Main results}\label{section: introduction} 
The optimal transport problem is the following: given \emph{source} and \emph{target} probability measure spaces $\paren{\sourcedom, \sourcemeas}$, $\paren{\targetdom, \targetmeas}$, and a measurable \emph{cost function} $c: \sourcedom\times\targetdom\to\R$, find an optimal measurable mapping $\Breniermap: \sourcedom\to\targetdom$ defined $\sourcemeas$-a.e., minimizing 
\begin{align}\label{OT}
 \int_{\sourcedom}c(x, F(x))\sourcemeas(dx)
\end{align}
over the set of all measurable $F: \sourcedom\to\targetdom$ with $F_\#\sourcemeas=\targetmeas$. 

 A fundamental problem in optimal transport theory is to understand the regularity of optimal maps. In the classical case where the cost function is given by the quadratic cost $c(x,\xbar) = \norm{x-\xbar}^2/2$ on $\R^n\times \R^n$ (or equivalently $c(x,\xbar) =- \euclidean{ x}{\xbar}$), it is well known that the optimal map is H\"older continuous  \cite{Caf90, Caf91,  Caf92} if the support of the target measure is convex, for source and target measures with densities bounded above and below. 
For more general cost functions one would require certain structural conditions, namely, ~\eqref{A1},~\eqref{A2} and especially, the $c$-convexity of the support of the target measure and the condition ~\eqref{A3w}  \cite{MTW05, TW09}  which are shown to be necessary  \cite{MTW05, Loe09} for regularity theory of the classical case to be extended (see  Section~\ref{section: convex geometry} for relevant definitions). 
Under these conditions,  H\"older continuity of the optimal map is known, under the assumption that the source and target measures have densities bounded from above and below, see \cite{Loe09, Liu09, FKM11, GK14}. For smoother measures, higher regularity theory is also known, see \cite{MTW05, TW09, LTW10}.

A natural question one can ask is what happens if one of the above structural conditions is violated. In particular, we focus on the geometric condition of convexity / $c$-convexity of the support of the target measure, where it is known that without such conditions optimal maps may not be continuous \cite{MTW05, Caf92}. 

As a first step in this direction, in this paper we analyze the case of \emph{isolated} singular points (thoughout this paper, by \emph{singularity} or \emph{singular point} we indicate a point where a function is not differentiable). In the following main theorem we prove that if the support of the target measure has no holes (by which we mean a bounded, open, connected component of the complement of the target domain), then the corresponding Brenier solution cannot have an isolated singular point in the interior of the support of the source measure. The precise statement follows with the relevant definitions given in Section~\ref{section: convex geometry}.

 \begin{thm}\label{thm: no isolated singularity in euclidean case}
Let  $\M$ and $\Mbar$  be $n$-dimensional Riemannian manifolds and $\sourcedom$ and $\targetdom$ be  bounded  domains in $\M$ and $\Mbar$, respectively. Let $c$ be a cost function  $c: \sourcedom\times\targetdom\to\R$  that satisfies \eqref{A0},~\eqref{A1},~\eqref{A2}, and~\eqref{A3w}, and assume that $\sourcedom$ and $\targetdom$ are $c$-convex with respect to each other.
 
Consider two absolutely continuous probability measures $\sourcemeas=f\dVol$ and $\targetmeas=g\dVol$ on $\M$ and $\Mbar$, respectively, with  supports $\source \subset \sourcedom$ and $\target \subset \targetdom$. Assume  that  $\target\cap\targetdombdry=\emptyset$ and 
 that there exists a constant $0<\Lambda<\infty$ such that 
\begin{align}\label{eqn: density bound}
\Lambda^{-1}\leq f,\ g\leq \Lambda
  \end{align}
on their supports. 

Finally, let $u$ be a Brenier solution (see Definition~\ref{def: brenier solution}) to the optimal transport problem with cost $c$.  For each $x_0\in\sourceint$, if there are no holes (see Definition \ref{def: holes})
 in $\target$ that are $c$-convex with respect to $x_0$, 
 then $x_0$ cannot be an isolated singular point of $u$.
\end{thm}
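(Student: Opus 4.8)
The plan is to argue by contradiction: assume $x_0 \in \sourceint$ is an isolated singular point of the Brenier potential $u$. Since $u$ is $c$-convex (being a Brenier solution), the $c$-subdifferential $\csubdiff{u}{x_0}$ is a set containing more than one point — this is the defining feature of a singularity. First I would pass to $c$-exponential coordinates centered at $x_0$ to reduce to a situation modeled on the Euclidean/Minkowski case: under \eqref{A0}--\eqref{A2} and \eqref{A3w}, in the appropriate dual coordinates $p = -D_x c(x_0, \cdot)$ the image $\csubdiffcoord{u}{x_0}$ of the $c$-subdifferential is a convex set (this is essentially the content of the weak MTW condition paired with $c$-convexity, and should be quotable from the convex-geometry section). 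Because $x_0$ is isolated, on a punctured neighborhood $\ball{\rho}{x_0}\setminus\{x_0\}$ the map $u$ is differentiable, so $\gradu$ is single-valued there, and the relevant convex set $\affinevalley$ — the "valley" along which the subdifferential degenerates — has dimension $n-k$ with $1 \le k \le n$; the case $k = n$ (subdifferential with nonempty interior) is handled separately and more easily since then a whole ball in the target is hit by $x_0$ alone, violating the density bounds by a direct mass comparison.

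The heart of the argument treats $1 \le k \le n-1$. Here the subdifferential $\csubdiff{u}{x_0}$ is a lower-dimensional convex body, and I would analyze the geometry of the sublevel sets $\sublevelset_t = \{u(x) + c(x, \xbar_e) \le t\}$ for a boundary exposed point $\xbar_e$ of the subdifferential. The key structural fact is that, near $x_0$, these sublevel sets look like thin slabs collapsing onto the affine "valley" $\affinecvalley$, and the optimal map sends the punctured neighborhood of $x_0$ into a region that must "wrap around" the degenerate directions. The topological hypothesis enters precisely here: the absence of a $c$-convex hole in $\target$ adjacent to $x_0$ forces the image $\gradu(\ball{\rho}{x_0}\setminus\{x_0\})$ to cover, up to small error, a full neighborhood of $\csubdiff{u}{x_0}$ in $\targetdom$ — if there were a hole, the map could legitimately avoid a region and no contradiction would arise. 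I would make this precise by a degree-theoretic or connectedness argument: the continuous extension of $\gradu$ over the puncture would have to have image of dimension $n-k < n$ at the single point $x_0$, yet cover a set of positive $n$-dimensional measure in any neighborhood, and the no-holes condition prevents the "missing" annular region from being discarded. Combining the mass balance $\gradu_\#(f\dVol) = g\dVol$ with the density bounds \eqref{eqn: density bound} then yields $\Lambda^{-1}\meas{\Vol}{\ball{\rho}{x_0}} \le \meas{\Vol}{\gradu(\ball{\rho}{x_0})}$ and an upper bound on the latter that shrinks faster than $\rho^n$, the desired contradiction.

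The main obstacle, I expect, is making the topological step rigorous: precisely quantifying how the no-holes hypothesis forces the image of a small punctured neighborhood of $x_0$ to fill out a full-dimensional neighborhood of the degenerate subdifferential, rather than merely a thin sliver around $\affinecvalley$. This requires controlling the shape of the sublevel sets $\sublevelset_t$ and showing that their boundaries, pushed forward, sweep out the region between the valley and the complement of $\target$ — and that this region cannot be a hole by hypothesis, hence must lie in $\target$ and carry mass $g\dVol$ with $g \ge \Lambda^{-1}$. The auxiliary cost $\ctil$ and the curved-valley objects $\curvedvalley$ introduced in the notation suggest the right technical device is to replace $c$ by a modified cost that flattens the valley, reducing to an essentially affine/convex-geometric estimate on cones $\ccone$, after which the Minkowski-type volume comparison is routine. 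The regularity estimates of \cite{Loe09, FKM11, GK14} should supply the needed local Hölder control to pass from the discrete mass balance to the pointwise geometric contradiction.
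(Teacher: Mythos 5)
Your overall strategy (contradiction, splitting on the affine dimension of the subdifferential at $x_0$) matches the paper's skeleton, but both halves of your case analysis have genuine gaps, and you have inverted where the topological hypothesis does its work.

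First, the full-dimensional case is not disposed of by ``a direct mass comparison.'' If $\affdim\subdiff{u}{x_0}=n$, then $c$-monotonicity shows that $\cExp{x_0}{\subdiffint{u}{x_0}}$ is hit by $x_0$ alone, and mass balance then tells you only that this open set is \emph{disjoint} from $\target$ --- there is no contradiction yet, because nothing forces that ball to carry any mass of $\targetmeas$. (Caffarelli's example with a non-convex target shows this situation genuinely occurs.) To get a contradiction you must show this open set is a \emph{hole}: its boundary must lie in $\target$. That is exactly what the Albano--Cannarsa singularity-propagation result supplies --- every boundary subgradient at an isolated singularity is a limit of gradients at nearby differentiability points, and those gradients map into $\target$ by the definition of Brenier solution. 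This is the only place in the paper where the no-holes hypothesis is used; your proposal instead defers the topological input to the lower-dimensional case, where it plays no role.

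Second, your treatment of the case $\affdim\subdiff{u}{x_0}<n$ is not a proof. The degree-theoretic claim that the image of a punctured neighborhood must ``cover a full neighborhood of $\csubdiff{u}{x_0}$'' is neither established nor what is needed. The paper's route is: since $\csubdiffcoord{u}{x_0}$ is convex (Loeper) and contains more than one point, pick a \emph{non-extremal} point $\pbar_0$; Loeper's maximum principle then shows the contact set of the supporting $c$-affine function with focus $\xbar_0=\cExp{x_0}{\pbar_0}$ is the singleton $\{x_0\}$ --- crucially using that the singularity is isolated, since otherwise the singularity would propagate along a $c$-segment through the contact set. This localizes the sections $S_h$ near $x_0$; the $c$-cone lemma then places $-Dc(x_0,\xbar_0)$ in $\coord{\target}{x_0}^{\interior}$, and an Aleksandrov-type estimate (the adaptation of \cite[Lemma 6.10]{FKM11}) forces $u$ to be differentiable at $x_0$, the contradiction. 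None of these steps (non-extremal point, singleton contact set, $c$-cone, Aleksandrov estimate) appear in your outline, and the volume comparison you sketch ($\Lambda^{-1}\Vol(B_\rho)\le\Vol(\gradu(B_\rho))$ versus an upper bound ``shrinking faster than $\rho^n$'') is not substantiated. Finally, the objects $\affinecvalley$, $\curvedvalley$, $\ctil$ you build your heuristic around are unused preamble macros, not part of the argument.
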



\begin{rmk}
 Examples of cost functions satisfying \eqref{A0}, \eqref{A1}, \eqref{A2}, and \eqref{A3w}, can be found in \cite{MTW05, TW09, Loe10, KM12}, see also \cite{Vil09}.  

 In the special case when $\M$ and $\Mbar$ are domains in Euclidean space and $c(x, \xbar) =\frac{1}{2} \norm{x-\xbar}^2$, $c$-convexity reduces to ordinary convexity (we will henceforth refer to this setting as the \emph{Euclidean case}). In the two dimensional Euclidean case, Figalli \cite{Fig10} studied the geometric structure of the singular set, and the above result on isolated singularity follows as a special case. In higher dimensions, it seems that no result on the geometric structure of singular sets (similar to the one in~\cite{Fig10})  is currently known.  For some previous related works in the Euclidean case, see~\cite{Jor55, Bey91},~\cite{Yu07}, and~\cite[Section 5]{CY09} in the case of dimension $2$, and~\cite{Bey95, FW95} in higher dimensions.
 
 While the other results mentioned above consider isolated singularities of the Monge-Amp{\`e}re equation, the papers \cite{Yu07} and~\cite[Section 5]{CY09} deal specifically with the case of the optimal transport problem  (however, still in the Euclidean case). Both results discuss the finer question of Lipschitz or $C^1$ propagation of singularities, but assume stronger conditions in addition to topological restrictions on the support of the target measure. Specifically, \cite{Yu07} assumes that all singular points have a subdifferential of affine dimension at most one, while \cite{CY09} requires the support of the source measure be convex. Our main result applies to a more general class of $c$, and also requires no hypothesis on $\source$; in fact we obtain the condition required for \cite{Yu07} in the course of our proof (see Proposition~\ref{prop: no n dim subdifferential}).
 \end{rmk}

Throughout this paper, we will denote the closure, interior, and boundary of a set $\arbitrary$ by $\arbitrary^{\cl}$, $\arbitrary^{\interior}$, and $\arbitrary^\partial$ respectively.

\section{Relevant definitions and preliminaries}\label{section: convex geometry}
In this section we gather some relevant definitions and facts about $c$-convex potential functions in relation to solutions of the optimal transport problem. Some good references are \cite{Vil09, Gut01}.

Let  $\M$ and $\Mbar$  be $n$-dimensional  Riemannian manifolds and $\sourcedom$ and $\targetdom$ be  bounded domains in $\M$ and $\Mbar$, respectively. Let $c$ be a measurable cost function  $c: \sourcedom\times\targetdom\to\R$.
We start out by stating the various assumptions we may require on our cost function $c$.

\noindent\underline{\textbf{Smoothness of cost}}:
\begin{equation}\label{A0}\tag{A0}
c\in C^4(\sourcedom^{\cl}\times \targetdom^{\cl}).
\end{equation}

\noindent\underline{\textbf{Twist}}:

We will say $c$ satisfies condition~\eqref{A1} if each of the mappings
\begin{equation}
\begin{array}{rl}
\targetdom\ni\xbar & \mapsto -Dc(x_0, \xbar)\in\cotanspM{x_0},\\
\sourcedom\ni x & \mapsto -\Dbar c(x, \xbar_0)\in\cotanspMbar{\xbar_0},
\end{array} \label{A1}\tag{Twist}
\end{equation}
are injective for each $x_0 \in\sourcedom$ and $\xbar_0 \in \targetdom$.
Here, $D$, $\bar D$ denote the usual differential in the $x$ or $\xbar$ variable. 

\begin{rmk}\label{rmk: twist comments}
We use the standard notation $\cExp{x_0}{\cdot}$ and $\cExp{\xbar_0}{\cdot}$ to denote the inverses of the above two mappings. Also, for any $\arbitrary\subset \sourcedom$, $\xbar\in\targetdom$ or $\arbitrarybar\subset\targetdom$, $x\in\sourcedom$, we will write
\begin{align*}
 \coord{\arbitrary}{\xbar}:&=-\Dbar c(\arbitrary, \xbar),\\
 \coord{\arbitrarybar}{x}:&=-Dc(x, \arbitrarybar).
\end{align*}
We also comment here, for the cost $c(x, \xbar) = -\euclidean{x}{\xbar}$ on $\R^n\times \R^n$, these mappings are both just the identity map.

\begin{defin}[\noindent\underline{\textbf{$c$-convexity of a set}}\cite{MTW05}]\label{defin: c-convex sets} If $\arbitrary\subset \sourcedom$ and $\xbar\in\targetdom$, we say that \emph{$\arbitrary$ is $c$-convex with respect to $\xbar$} if the set $\coord{\arbitrary}{\xbar}$ is a convex subset of $\cotanspMbar{\xbar}$. If $\arbitrarybar\subset\targetdom$ and $x\in\sourcedom$, we define when \emph{$\arbitrarybar$ is $c$-convex with respect to $x$} and \emph{$\arbitrary$ and $\arbitrarybar$ are $c$-convex with respect to each other} in the obvious way.
\end{defin}
\end{rmk}
\noindent\underline{\textbf{Nondegeneracy}}:

We say $c$ satisfies condition~\eqref{A2} if, for each $x\in\sourcedom$ and $\xbar\in\targetdom$, the linear mapping
\begin{equation}
\begin{array}{rl}
-\Dbar D c(x, \xbar): \tanspMbar{\xbar} \to \tansp{-Dc(x, \xbar)}{\paren{\cotanspM{x}}\cong \cotanspM{x}}
\end{array}\label{A2}\tag{Nondeg}
\end{equation}
is invertible
(and consequently, so is its adjoint mapping, $-D\Dbar c(x, \xbar):  \tanspM{x}\to \cotanspMbar{\xbar}$).

\noindent\underline{\textbf{MTW (Ma-Trudinger-Wang condition)}} \cite{MTW05, TW09, Loe09}:
We say $c$ satisfies the condition~\eqref{A3w} if, for any $x\in\sourcedom$, $\xbar\in\targetdom$, and $V\in \tanspM{x}$, $\eta\in \cotanspM{x}$ with  $\eta(V)=0$,
\begin{equation}\label{A3w}\tag{MTW}
\MTWcoord{i}{j}{k}{l}(x, \xbar)V^iV^j\eta_k\eta_l\geq 0.
\end{equation}
Here we fix coordinate systems on $\M$ and $\Mbar$ and take all derivatives with respect to these coordinates; lower indices before a comma denote derivatives of $c$ with respect to the $x$ variable, and lower indices after a comma denote derivatives with respect to the $\xbar$ variable. Also, a pair of raised indices denotes the inverse of a matrix.

We next define some basic concepts of use in $c$-convex geometry.
\begin{defin}\label{def: c-convex functions}
A real valued function $u$ defined on $\sourcedom$ is said to be \emph{$c$-convex} if for any $x_0\in\sourcedom$, there exists some $\xbar_0\in\targetdom$ and $\lambda_0\in\R$ such that
 \begin{align*}
 -c(x_0, \xbar_0)+\lambda_0&=u(x_0),\\
 -c(x, \xbar_0)+\lambda_0&\leq u(x)
\end{align*}
for all $x\in\sourcedom$. 
Any function of the form $-c(\cdot, \xbar_0)+\lambda_0$ is called a \emph{$c$-affine function (with focus $\xbar_0$)}, and if it satisfies the above relations is said to \emph{support $u$ from below at $x_0$}.
\end{defin}

We also define the $c$-subdifferential of a $c$-convex function, and the subdifferential of a semi-convex function.
\begin{defin}\label{def: subdifferential}
The \emph{subdifferential} of a semi-convex function $u$ at a point  $x\in\domint{u}$ is defined by the set
\begin{align*}
 \subdiff{u}{x}:=\curly{\pbar\in\cotanspM{x}\mid u(x)+\inner{v}{\pbar}+o(\norm{v})\leq u(\exp_{x}{(v)}),\ \tanspM{x}\ni v\to 0},
\end{align*}
here $\exp_{x}$ is the Riemannian exponential mapping on $\M$.

Similarly, the \emph{$c$-subdifferential} of a $c$-convex function $u$ at a point  $x\in\domint{u}$ is defined as the set
\begin{align*}
 \csubdiff{u}{x}:=\curly{\xbar\in\targetdom\mid -c(y, \xbar)+c(x, \xbar)+u(x)\leq u(y),\ \forall y\in\dom{u}}.
\end{align*}
If $\arbitrary\subseteq\sourcedom$, we write
\begin{align*}
 \csubdiff{u}{\arbitrary}:=\bigcup_{x\in \arbitrary}\csubdiff{u}{x}.
\end{align*}
\end{defin}
\begin{rmk}\label{rmk: c-subdifferential remarks}
Note that if $u$ is semi-convex, each $\subdiff{u}{x} $ is a nonempty, convex set, and for any point $x$ where $u$ is differentiable, we have $\subdiff{u}{x}=\curly{\gradu(x)}$. Additionally, it is known that if $c$ satisfies~\eqref{A0}, then a $c$-convex function is semi-convex, hence in particular it is differentiable a.e.

Additionally, if $u$ is $c$-convex it is not difficult to see that its $c$-subdifferential is \emph{$c$-monotone}, i.e. for any $x_0$, $x_1\in\sourcedom$ and $\xbar_0\in\csubdiff{u}{x_0}$, $\xbar_1\in\csubdiff{u}{x_1}$, we have
\begin{align*}
 c(x_0, \xbar_0)+c(x_1, \xbar_1)\leq c(x_0, \xbar_1)+c(x_1, \xbar_0).
\end{align*}


\end{rmk}
\begin{defin}\label{def: brenier solution}
Suppose $c$ satisfies~\eqref{A1}. A \emph{Brenier solution (to the optimal transport problem with cost $c({x},{\xbar})$) pushing $\sourcemeas$ forward to $\targetmeas$} is a $c$-convex function $u$ defined on $\source$ such that 
\begin{align*}
 \Breniermap_\#\sourcemeas&=\targetmeas,\\
 \Breniermap\paren{\source}&\subseteq\target,
\end{align*}
where $\Breniermap$ is the \emph{Brenier map} defined for a.e. $x$ (where $u$ is differentiable) by
\begin{align*}
 \Breniermap(x):&=\cExp{x}{\gradu(x)}.
\end{align*}
\end{defin}

If $u$ is a Brenier solution pushing $\sourcemeas$ forward to $\targetmeas$, then it is well known that $\Breniermap$ as defined above is optimal in~\eqref{OT}.

The following result (discovered by Loeper~\cite{Loe09} for domains in $\R^n$, further developed in~\cite{TW09a,KM10, LV10, FRV11}, and extended to domains in manifolds under certain conditions) details certain geometric properties of $c$-convex functions. It will play a key role in our main proof.
\begin{thm}[Loeper's maximum principle~\cite{Loe09}]\label{thm: DASM}
Suppose $c$, $\sourcedom$, and $\targetdom$ satisfy the conditions of Theorem~\ref{thm: no isolated singularity in euclidean case}.
 Also let $x_0\in\sourcedom$, $\pbar_0$, $\pbar_1\in\targetdomcoord{x_0}$, and $\xbar(t):=\cExp{x_0}{(1-t)\pbar_0+t\pbar_1}$.  Then for any $x\in\sourcedom$,
\begin{align}\label{eq:LoeperMP}
&-c(x, \xbar(t))+c(x_0, \xbar(t))\\\nonumber
&\qquad\leq \max \curly{-c(x, \xbar(0)) +c(x_0,  \xbar(0)), -c(x, \xbar(1))+c(x_0, \xbar(1))}.
\end{align}
An analogous inequality holds with the roles of $\sourcedom$ and $\targetdom$ reversed.
\end{thm}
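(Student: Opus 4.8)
The plan is to prove the inequality \eqref{eq:LoeperMP} along a single $c$-segment; the reversed statement then follows by the symmetry of \eqref{A0}--\eqref{A3w} and of mutual $c$-convexity under interchanging the two factors. Fix $x_0$, $\pbar_0$, $\pbar_1$, and $x$. Since $\sourcedom$ and $\targetdom$ are $c$-convex with respect to each other, the whole segment $(1-t)\pbar_0+t\pbar_1$ lies in $\targetdomcoord{x_0}$, so $\xbar(t)\in\targetdom$ for every $t\in[0,1]$ and $g(t):=-c(x,\xbar(t))+c(x_0,\xbar(t))$ is a well-defined $C^2$ function on $[0,1]$ by \eqref{A0}. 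Note that $g(0)$ and $g(1)$ are exactly the two terms in the maximum on the right of \eqref{eq:LoeperMP}, and that $g\equiv 0$ when $x=x_0$, so we may assume $x\neq x_0$. Arguing by contradiction, suppose $g$ takes a value strictly larger than $\max\curly{g(0),g(1)}$. Then, being continuous on the compact interval, $g$ attains its maximum over $[0,1]$ at an interior point $t^*\in(0,1)$, at which $g'(t^*)=0$ and $g''(t^*)\leq 0$.

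The crux is to show that \eqref{A3w} forces $g''(t^*)\geq 0$. I would differentiate $g$ twice, using the defining feature of the $c$-segment --- the curve $q(t):=-Dc(x_0,\xbar(t))=(1-t)\pbar_0+t\pbar_1$ is affine, hence $\ddot q\equiv 0$ --- together with \eqref{A1} and \eqref{A2} to solve for $\dot\xbar(t)$ and $\ddot\xbar(t)$ in terms of derivatives of $c$. A direct computation then reveals that the equation $g'(t^*)=0$ is precisely the condition that a certain tangent vector $V\in\tanspM{x}$, formed from the $c$-exponential data of $x$ and of $x_0$ based at $\xbar(t^*)$, is annihilated by a certain covector $\eta\in\cotanspM{x}$ formed from $\dot q$, i.e. $\eta(V)=0$; and that, at such a point, $g''(t^*)$ is a positive multiple of the Ma-Trudinger-Wang expression $\MTWcoord{i}{j}{k}{l}(x,\xbar(t^*))V^iV^j\eta_k\eta_l$. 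Hence \eqref{A3w} yields $g''(t^*)\geq 0$, and combined with the previous paragraph we get $g''(t^*)=0$.

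It remains to promote this degenerate equality to a genuine contradiction --- and this is the main obstacle, since second-order information alone cannot exclude a single interior bump exceeding both endpoint values. Under the strong Ma-Trudinger-Wang condition one is finished at once, as the form is then strictly positive on $V\neq 0$ while $V\neq 0$ whenever $x\neq x_0$. Under the weak form \eqref{A3w} I would instead argue by continuity in the variable $x$: show that the set of $x\in\sourcedom$ for which \eqref{eq:LoeperMP} holds for every $t\in[0,1]$ is nonempty (it contains $x_0$), closed (by continuity of $c$), and open --- the last point using the computation above at a hypothetical first point of failure, together with a Taylor expansion of $g$ at $t=0$ and $t=1$ to control the behaviour near the endpoints, where \eqref{eq:LoeperMP} holds only with equality --- and then conclude by connectedness of $\sourcedom$ that it is all of $\sourcedom$. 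An alternative is an approximation scheme: perturb the foci $\pbar_0,\pbar_1$ and the cost compatibly with \eqref{A3w} so as to restore strict positivity, and pass to the limit. Since the resulting statement is Loeper's maximum principle, one may of course also simply invoke~\cite{Loe09} and its refinements in~\cite{TW09a,KM10,LV10,FRV11}.
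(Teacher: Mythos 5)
The paper does not prove this theorem at all: it is imported verbatim from the literature, with the displayed citation to~\cite{Loe09} and the refinements~\cite{TW09a,KM10,LV10,FRV11} serving as the entire justification. So your closing sentence --- simply invoking those references --- is exactly the paper's treatment, and to that extent your proposal is adequate.

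As a proof sketch, however, your argument has a genuine gap precisely where you suspect one. The local computation is fine and standard: using $\ddot q\equiv 0$, \eqref{A1} and \eqref{A2} to express $\dot{\xbar},\ddot{\xbar}$, one checks that $g'(t^*)=0$ produces a pair $(V,\eta)$ with $\eta(V)=0$ and that $g''(t^*)$ is a nonnegative multiple of the tensor in \eqref{A3w}. But under the \emph{weak} condition this only yields $g''\geq 0$ at critical points, which, as you note, cannot exclude an interior bump. Neither of your two proposed repairs closes this. The connectedness-in-$x$ argument founders on openness: at a ``boundary'' point $x_*$ of your set, \eqref{eq:LoeperMP} may hold with equality on a nontrivial set of interior $t$, and nothing in the second-order computation prevents a nearby $x$ from violating the inequality there; making openness quantitative is essentially equivalent to the full theorem. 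The perturbation scheme is also not available in general: there is no procedure that deforms an arbitrary cost satisfying \eqref{A3w} into one satisfying the strict inequality while staying close to the original (the borderline costs, for which the tensor vanishes identically on some directions, cannot be ``strictified''). The actual proofs in the literature take different routes: one upgrades the computation to a differential inequality of the form $g''\geq -C\norm{g'}$ valid in a neighborhood of critical points and concludes by an ODE comparison/Gronwall argument (\cite{Loe09,LV10,FRV11}, see also Villani's book), or one argues globally via $c$-convexity of sections as in~\cite{TW09a}, or geometrically as in~\cite{KM10}. Since the paper itself only cites the result, your deferral to the references is the correct resolution; the sketch preceding it should not be read as a self-contained proof.
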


This lemma has several important consequences, we will require the following two of them later; the second of which was first observed and used in \cite{FKM09, FKM11} and \cite{Liu09}.
\begin{cor}[{\cite[Theorem 3.1]{Loe09}}]\label{cor: c-convex c-subdifferential}
Suppose $c$, $\sourcedom$, and $\targetdom$ satisfy the same conditions as Theorem~\ref{thm: DASM} above, and $u$ is a $c$-convex function on $\sourcedom$.
Then  for any $x_0\in\sourcedom$, 
 \begin{align}\label{eq:c-sub is sub}
\csubdiffcoord{u}{x_0}=\subdiff{u}{x_0},
 \end{align}
 in particular,  $\csubdiff{u}{x_0}$ is $c$-convex with respect to $x_0$.
\end{cor}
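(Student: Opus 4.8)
The plan is to establish the equality $\csubdiffcoord{u}{x_0}=\subdiff{u}{x_0}$ by proving the two inclusions separately, obtaining the $c$-convexity of $\csubdiff{u}{x_0}$ as a by-product of the middle step. The inclusion $\csubdiffcoord{u}{x_0}\subseteq\subdiff{u}{x_0}$ is immediate: if $\xbar\in\csubdiff{u}{x_0}$ then by Definition~\ref{def: subdifferential} the $c$-affine function $-c(\cdot,\xbar)+c(x_0,\xbar)+u(x_0)$ supports $u$ from below at $x_0$, and being $C^1$ by~\eqref{A0} its differential $-Dc(x_0,\xbar)=\coord{\xbar}{x_0}$ belongs to $\subdiff{u}{x_0}$ (here one uses only that the subdifferential, defined through $\exp_{x_0}$, contains the differentials of all smooth functions touching $u$ from below at $x_0$).

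For the $c$-convexity, let $\xbar_0,\xbar_1\in\csubdiff{u}{x_0}$ and write $\pbar_i:=\coord{\xbar_i}{x_0}$. Since $\targetdom$ is $c$-convex with respect to $x_0$, the segment $\pbar_t:=(1-t)\pbar_0+t\pbar_1$ stays in $\targetdomcoord{x_0}$, so $\xbar_t:=\cExp{x_0}{\pbar_t}$ is a well-defined point of $\targetdom$ by~\eqref{A1}. Loeper's maximum principle (Theorem~\ref{thm: DASM}, which is where~\eqref{A3w} enters), applied with $x=y$, yields for every $y\in\dom{u}$
\[
-c(y,\xbar_t)+c(x_0,\xbar_t)\leq\max\curly{-c(y,\xbar_0)+c(x_0,\xbar_0),\ -c(y,\xbar_1)+c(x_0,\xbar_1)}\leq u(y)-u(x_0),
\]
where the last inequality holds because both $\xbar_0$ and $\xbar_1$ lie in $\csubdiff{u}{x_0}$. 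Hence $\xbar_t\in\csubdiff{u}{x_0}$, i.e. $\pbar_t\in\csubdiffcoord{u}{x_0}$, so $\csubdiffcoord{u}{x_0}$ is convex; this is exactly the asserted $c$-convexity of $\csubdiff{u}{x_0}$ with respect to $x_0$.

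It remains to prove $\subdiff{u}{x_0}\subseteq\csubdiffcoord{u}{x_0}$. I would invoke the standard fact that, since $u$ is semi-convex (by~\eqref{A0} and Remark~\ref{rmk: c-subdifferential remarks}), in a fixed chart $\subdiff{u}{x_0}$ is the convex hull of the compact set of \emph{reachable gradients}, i.e. of limits $\lim_k\gradu(x_k)$ taken along sequences $x_k\to x_0$ of points at which $u$ is differentiable. By the convexity just established it then suffices to check that each such limit $p=\lim_k\gradu(x_k)$ lies in $\csubdiffcoord{u}{x_0}$. At a point of differentiability $x_k$, condition~\eqref{A1} forces $\csubdiff{u}{x_k}$ to be the single point $\xbar_k:=\cExp{x_k}{\gradu(x_k)}\in\targetdom$; using joint continuity of the $c$-exponential map in its two arguments (a consequence of~\eqref{A0},~\eqref{A1},~\eqref{A2}) one gets $\xbar_k\to\xbar_\infty:=\cExp{x_0}{p}$, and passing to the limit in the defining inequality of $\csubdiff{u}{x_k}$ shows $\xbar_\infty\in\csubdiff{u}{x_0}$, whence $p=\coord{\xbar_\infty}{x_0}\in\csubdiffcoord{u}{x_0}$. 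Combining the three steps gives $\csubdiffcoord{u}{x_0}=\subdiff{u}{x_0}$.

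The main obstacle is this last step: one must make the reachable-gradient description of $\subdiff{u}{x_0}$ precise in the Riemannian setting and, more delicately, rule out that the points $\xbar_k$ (equivalently their limit $\xbar_\infty$) escape to $\targetdombdry$, which would obstruct applying the definition of $\csubdiff{u}{x_0}$; this is where boundedness of $\targetdom$ and the structural hypotheses on the data are used. By contrast, the convexity input of the second step, although conceptually the heart of the matter, is essentially immediate once Theorem~\ref{thm: DASM} is in hand.
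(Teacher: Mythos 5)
The paper does not prove this corollary at all: it is quoted directly from \cite[Theorem 3.1]{Loe09}, so there is nothing internal to compare against. Your argument is, in substance, the standard proof of Loeper's theorem and it is correct. The easy inclusion $\csubdiffcoord{u}{x_0}\subseteq\subdiff{u}{x_0}$ and the convexity of $\csubdiffcoord{u}{x_0}$ via the maximum principle \eqref{eq:LoeperMP} are exactly right, and reducing the reverse inclusion to reachable gradients via the convex-hull description of the subdifferential of a semi-convex function is the standard route. Two small points. First, the boundary issue you flag at the end is genuine but mild: since by \eqref{A0} the cost is $C^4$ up to $\sourcedom^{\cl}\times\targetdom^{\cl}$ and $\targetdom$ is bounded, the limit $\xbar_\infty$ exists in $\targetdom^{\cl}$ and the supporting inequality passes to the limit there; the convention of allowing $c$-subdifferentials to take values in the closure (as in \cite{Loe09, FKM11}) disposes of it, and this is how the paper implicitly operates (cf.\ \eqref{eqn: subdifferential contained in closure}, where closedness of $\target$ is invoked). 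Second, your step $\xbar_k\to\xbar_\infty=\cExp{x_0}{p}$ needs the joint continuity of $(x,\pbar)\mapsto\cExp{x}{\pbar}$, which indeed follows from \eqref{A0}, \eqref{A1}, \eqref{A2} by the implicit function theorem, as you say. With those provisos the proof is complete and matches the cited source's approach.
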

\begin{cor}\label{cor: c-convex sublevelset}
Suppose $c$, $\sourcedom$, and $\targetdom$ satisfy the same conditions as Theorem~\ref{thm: DASM}, and $u$ is a $c$-convex function on $\sourcedom$.
Then, for any $\xbar_0\in\Omegabar$ and $\lambda_0\in\R$, the {\em section} 
\begin{align*}
 \{x\in\sourcedom\mid u(x)\leq -c(x, \xbar_0)+\lambda_0\}
\end{align*}
is $c$-convex with respect to $\xbar_0$.
\end{cor}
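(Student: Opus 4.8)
The plan is to prove convexity of $\coord{S}{\xbar_0}$ directly, where $S := \{x \in \sourcedom \mid u(x)\leq -c(x,\xbar_0)+\lambda_0\}$, by applying Loeper's maximum principle (Theorem~\ref{thm: DASM}) \emph{with the roles of $\sourcedom$ and $\targetdom$ interchanged}. Note $\xbar_0\in\Omegabar=\targetdom$, so by \eqref{A1} the map $\cExp{\xbar_0}{\cdot}$ is well defined.

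First I would fix two points $x_0,x_1\in S$ and set $p_i:=\coord{x_i}{\xbar_0}\in\coord{\sourcedom}{\xbar_0}$. Since $\sourcedom$ is $c$-convex with respect to $\xbar_0$, the set $\coord{\sourcedom}{\xbar_0}\subset\cotanspMbar{\xbar_0}$ is convex, so the $c$-segment $x(t):=\cExp{\xbar_0}{(1-t)p_0+tp_1}$ is defined and stays in $\sourcedom$ for all $t\in[0,1]$; in particular $\coord{x(t)}{\xbar_0}=(1-t)p_0+tp_1$. Thus it suffices to show $x(t)\in S$ for every $t$, since then $(1-t)p_0+tp_1\in\coord{S}{\xbar_0}$, which is exactly convexity of $\coord{S}{\xbar_0}$.

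To show $x(t)\in S$, fix $t$ and, using that $u$ is $c$-convex, choose a $c$-affine function $-c(\cdot,\xbar_t)+\mu_t$ with focus $\xbar_t\in\targetdom$ supporting $u$ from below at $x(t)$, so that $u(x(t))=-c(x(t),\xbar_t)+\mu_t$ and $-c(y,\xbar_t)+\mu_t\leq u(y)$ for all $y\in\sourcedom$. Evaluating this support inequality at $y=x_0$ and $y=x_1$ and using $x_0,x_1\in S$ gives
\begin{align*}
-c(x_i,\xbar_t)+c(x_i,\xbar_0)\leq \lambda_0-\mu_t,\qquad i=0,1.
\end{align*}
Now apply the reversed form of Theorem~\ref{thm: DASM} with base point $\xbar_0$, endpoints $p_0,p_1\in\coord{\sourcedom}{\xbar_0}$, $c$-segment $x(t)$, and evaluation point $\xbar=\xbar_t\in\targetdom$:
\begin{align*}
-c(x(t),\xbar_t)+c(x(t),\xbar_0)\leq\max_{i=0,1}\curly{-c(x_i,\xbar_t)+c(x_i,\xbar_0)}\leq\lambda_0-\mu_t.
\end{align*}
Rearranging and adding $\mu_t$ yields $u(x(t))=-c(x(t),\xbar_t)+\mu_t\leq -c(x(t),\xbar_0)+\lambda_0$, i.e.\ $x(t)\in S$, as desired.

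I do not expect a genuine obstacle here: all the analytic content is packaged in Loeper's maximum principle, which we may invoke. The only points needing care are (i) using $c$-convexity of $\sourcedom$ with respect to $\xbar_0$ to guarantee the $c$-segment remains in $\sourcedom$ so that the hypotheses of Theorem~\ref{thm: DASM} are satisfied, and (ii) observing that the focus $\xbar_t$ of each supporting $c$-affine function lies in $\targetdom$, which is precisely what is required to take $\xbar=\xbar_t$ as the evaluation point in the reversed maximum principle.
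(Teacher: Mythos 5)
Your proof is correct, and it is the standard derivation of this corollary from the reversed form of Loeper's maximum principle (the paper itself gives no proof, only a pointer to \cite{FKM09, FKM11, Liu09}, where essentially this argument appears). The two points you flag — $c$-convexity of $\sourcedom$ with respect to $\xbar_0$ to keep the $c$-segment in $\sourcedom$, and the focus $\xbar_t$ lying in $\targetdom$ so it is admissible as the evaluation point — are exactly the hypotheses that make the argument go through.
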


We also state here a fairly standard result concerning $c$-subdifferentials of \emph{$c$-cones}.
\begin{lem}\label{lem: c-cones}
 Suppose $c$, $\sourcedom$, and $\targetdom$ satisfy the conditions of Theorem~\ref{thm: no isolated singularity in euclidean case},
 $u$ is a $c$-convex function, $\mountain_0$ is a $c$-affine function with focus $\xbar_0$, and let $\subzero:=\curly{u\leq \mountain_0}$ be such that $\subzero\cap\sourcedombdry=\emptyset$. Fix $x_0\in\subzeroint$ and define \emph{the $c$-cone over the section $\subzero$ with vertex $x_0$} by
\begin{align*}
 \ccone(x):=\sup_{\mountain}\mountain(x),
\end{align*}
 where the supremum is taken over all $c$-affine functions $\mountain$ satisfying $\mountain\leq \mountain_0$ on $\subzerobdry$, and $\mountain(x_0)\leq u(x_0)$. Then, 
\begin{align}\label{eqn: c-cone inclusions}
\csubdiff{\ccone}{x_0}\subset\csubdiff{u}{\subzero},
\end{align}
and if $\xbar_0\in\targetdomint$, 
\begin{align}\label{eqn: interior of c-cone subdiff}
 -Dc(x_0, \xbar_0)\in\csubdiffcoord{\ccone}{x_0}^{\interior}.
\end{align}
\end{lem}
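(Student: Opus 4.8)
My plan is to reduce both assertions to the single identity $\ccone(x_0)=u(x_0)$ together with two ``vertical sliding'' arguments. First I would record the setup: since $c\in C^4(\sourcedom^{\cl}\times\targetdom^{\cl})$ it is bounded, so any admissible competitor $\mountain=-c(\cdot,\xbar)+\lambda$ with $\mountain(x_0)\le u(x_0)$ satisfies $\mountain(x)\le u(x_0)+\sup_{\xbar\in\targetdom^{\cl}}\paren{c(x_0,\xbar)-c(x,\xbar)}<\infty$; hence $\ccone$ is finite on all of $\sourcedom$ and $\csubdiff{\ccone}{x_0}$ is meaningful. The hypothesis $\subzero\cap\sourcedombdry=\emptyset$ makes $\subzero$ a compact subset of $\sourcedom$, and by continuity $u=\mountain_0$ on $\subzerobdry$ while $u(x_0)<\mountain_0(x_0)$ since $x_0\in\subzeroint$. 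For the identity, $\ccone(x_0)\le u(x_0)$ is built into the competitor class; for the reverse inequality I would choose $\xbar\in\csubdiff{u}{x_0}$ (nonempty because $u$ is $c$-convex) and note that the $c$-affine function $-c(\cdot,\xbar)+c(x_0,\xbar)+u(x_0)$ lies globally below $u$, hence below $\mountain_0$ on $\subzero$, so it is an admissible competitor attaining $u(x_0)$ at $x_0$.

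For \eqref{eqn: c-cone inclusions}, take $\xbar\in\csubdiff{\ccone}{x_0}$ and set $\mountain_{\xbar}:=-c(\cdot,\xbar)+c(x_0,\xbar)+\ccone(x_0)$; then $\mountain_{\xbar}\le\ccone$ on $\sourcedom$, $\mountain_{\xbar}(x_0)=\ccone(x_0)=u(x_0)$, and (being $\le\ccone\le\mountain_0$ there) $\mountain_{\xbar}\le\mountain_0$ on $\subzerobdry$. Put $t_0:=\min_{\subzero}\paren{u-\mountain_{\xbar}}$, attained by compactness; it is $\le 0$ because $\mountain_{\xbar}(x_0)=u(x_0)$. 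Then $\mountain_{\xbar}+t_0$ lies below $u$ on $\subzero$ and touches it somewhere; since $u-\mountain_{\xbar}=\mountain_0-\mountain_{\xbar}\ge 0$ on $\subzerobdry$ — strictly when $t_0<0$ — the contact set must meet $\subzeroint$ ($x_0$ itself works when $t_0=0$). At such an interior contact point $x_1$ the semi-convex function $u-\paren{\mountain_{\xbar}+t_0}$ has a local minimum, so $-Dc(x_1,\xbar)=D\mountain_{\xbar}(x_1)\in\subdiff{u}{x_1}$; combining this with Corollary~\ref{cor: c-convex c-subdifferential} (which gives $\subdiff{u}{x_1}=\csubdiffcoord{u}{x_1}$) and the injectivity in~\eqref{A1}, I conclude $\xbar\in\csubdiff{u}{x_1}\subset\csubdiff{u}{\subzero}$.

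For \eqref{eqn: interior of c-cone subdiff}, assume $\xbar_0\in\targetdomint$ and perturb the focus of $\mountain_0$: for $\xbar$ near $\xbar_0$ let $\mountain_{\xbar}:=-c(\cdot,\xbar)+c(x_0,\xbar)+u(x_0)$, so that $\mountain_{\xbar}(x_0)=u(x_0)=\ccone(x_0)$ for every such $\xbar$. At $\xbar=\xbar_0$ this is $\mountain_0-\paren{\mountain_0(x_0)-u(x_0)}$, which is strictly below $\mountain_0$ on the compact set $\subzerobdry$ because $\mountain_0(x_0)-u(x_0)>0$; so by continuity of $(x,\xbar)\mapsto\mountain_{\xbar}(x)-\mountain_0(x)$ there is a neighborhood $N\subset\targetdomint$ of $\xbar_0$ with $\mountain_{\xbar}<\mountain_0$ on $\subzerobdry$ for all $\xbar\in N$. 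Each such $\mountain_{\xbar}$ is then an admissible competitor, hence $\mountain_{\xbar}\le\ccone$ on $\sourcedom$ with equality at $x_0$, which means $\xbar\in\csubdiff{\ccone}{x_0}$. Thus $\curly{-Dc(x_0,\xbar)\ :\ \xbar\in N}\subset\csubdiffcoord{\ccone}{x_0}$, and since $\xbar\mapsto-Dc(x_0,\xbar)$ is injective by~\eqref{A1} with invertible differential by~\eqref{A2} — hence an open map — the left-hand side is an open neighborhood of $-Dc(x_0,\xbar_0)$, which gives \eqref{eqn: interior of c-cone subdiff}.

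The bookkeeping (finiteness and compactness) and the fact that a local support of a semi-convex function is recorded by its subdifferential are routine. The one step requiring care is, in the proof of \eqref{eqn: c-cone inclusions}, guaranteeing that the slid-up supporting function touches $u$ at an \emph{interior} point of $\subzero$ rather than on $\subzerobdry$: this is exactly where the identity $\ccone(x_0)=u(x_0)$ enters, since it forces $t_0\le 0$ and, combined with the nonnegativity of $u-\mountain_{\xbar}$ on $\subzerobdry$, drives the contact point into $\subzeroint$, where Loeper's maximum principle in the form of Corollary~\ref{cor: c-convex c-subdifferential} is available.
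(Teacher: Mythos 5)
Your proof is correct and follows essentially the same route as the paper: your argument for \eqref{eqn: interior of c-cone subdiff} is the paper's perturbation-of-the-focus argument (the paper parametrizes the nearby foci by covectors $\pbar\in\ball{r_0}{\pbar_0}$ and applies $\cExp{x_0}{\cdot}$, where you instead invoke openness of $\xbar\mapsto-Dc(x_0,\xbar)$ via \eqref{A1} and \eqref{A2} --- the same thing). For \eqref{eqn: c-cone inclusions} the paper simply cites \cite[Lemma 3.4]{GK14}, and your vertical-sliding argument --- using $\ccone(x_0)=u(x_0)$ together with $u-\mountain_{\xbar}\geq 0$ on $\subzerobdry$ to force the contact point into $\subzeroint$, then upgrading the local support to $\xbar\in\csubdiff{u}{x_1}$ via Corollary~\ref{cor: c-convex c-subdifferential} and the injectivity in \eqref{A1} --- is the standard proof of that inclusion and is sound.
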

\begin{proof}
 A proof of~\eqref{eqn: c-cone inclusions} 
 is contained in, for example,~\cite[Lemma 3.4]{GK14}.

We will show~\eqref{eqn: interior of c-cone subdiff}. By assumption, $\mountain_0(x_0)-u(x_0)>0$. Let us write $\pbar_0:=-Dc(x_0, \xbar_0)$, then recall that $\cExp{x_0}{\pbar_0}=\xbar_0$. Hence for a sufficiently small $r_0>0$, we have  (for some $C>0$ depending only on derivatives of the cost $c$) that  for all  $\pbar\in\ball{r_0}{\pbar_0}$, the function 
$\mountain_{\pbar}(x):=-c(x, \cExp{x_0}{\pbar})+c(x_0, \cExp{x_0}{\pbar})+u(x_0)$ satisfies
\begin{align*}
\mountain_{\pbar}(x)
& =(-c(x, \cExp{x_0}{\pbar})+c(x_0, \cExp{x_0}{\pbar})+\mountain_0(x_0))-\mountain_0(x_0)+u(x_0)\\
&
\leq  \mountain_0(x)+Cr_0-(\mountain_0(x_0)-u(x_0))\\
 &
 <\mountain_0(x)
 \end{align*}
for all $x\in\subzerobdry$. Thus $\mountain_{\pbar}$ is admissible in the supremum defining $\ccone$, and it must support the $c$-cone $\ccone$ from below at $x_0$. In particular we have for all $\pbar\in\ball{r_0}{\pbar_0}$ that $\cExp{x_0}{\pbar}\in \csubdiff{\ccone}{x_0}$, hence by Corollary~\ref{cor: c-convex c-subdifferential}, $\pbar\in\csubdiffcoord{\ccone}{x_0}$, proving~\eqref{eqn: interior of c-cone subdiff}.
\end{proof}
%
%
%
Finally, we give the precise definition of  a {\em hole}.
\begin{defin}\label{def: holes}
Given any domain $\arbitrary$, we say that $\hole$ is a \emph{hole in $\arbitrary$} if $\hole\neq\emptyset$ is a bounded, open, connected set such that
\begin{align*}
 \hole\cap \arbitrary^{\interior}&=\emptyset,\\
 \holebdry&\subset \arbitrary^{\partial}.
\end{align*}
\end{defin}

\section{Proof of Theorem~\ref{thm: no isolated singularity in euclidean case}
}\label{section: main proof}
We begin by deriving several intermediate results. We start with stating a very useful tool in our analysis, due to Albano and Cannarsa:
\begin{prop}[{\cite[Theorem 4.2]{AC99}}]\label{thm: singularity propagation}
 Suppose that $u$ is a semi-convex function and $x_0\in\domint{u}$ is a point where $u$ is not differentiable. If there exists an open neighborhood $\nbhd$ of $x_0$ such that $u$ is differentiable on $\nbhd\setminus\curly{x_0}$, 
 then for every $p\in\subdiffbdry{u}{x_0}$ there exists a sequence $x_k\to x_0$ such that $\gradu(x_k)\to p$ as $k\to \infty$.
\end{prop}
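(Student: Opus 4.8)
The plan is to argue by a one--dimensional mean value computation along a carefully chosen ray, after some harmless normalizations. Everything is local, so I would work in a coordinate chart centered at $x_0$ (identifying $\subdiff{u}{x_0}$ with the ordinary subdifferential of $u$ in that chart), translate $x_0$ to the origin, and replace $u$ by $u-\ell$, where $\ell$ is the affine function with $\ell(0)=u(0)$ and $D\ell=p$. This affects neither semi-convexity nor the hypothesis that $u$ is differentiable on $\nbhd\setminus\{0\}$, and it shifts $\gradu$ and $\subdiff{u}{0}$ by $-p$; so it suffices to treat the case $p=0$, $u(0)=0$, $0\in\subdiffbdry{u}{0}$, and to produce $x_k\to 0$ with $\gradu(x_k)\to 0$. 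I will use two standard facts: a semi-convex function differentiable at every point of an open set is $C^1$ there, so $\gradu$ is continuous on $\nbhd\setminus\{0\}$; and for semi-convex $u$ the one--sided directional derivative equals the support function of the subdifferential, $\lim_{t\to 0^+}t^{-1}(u(tv)-u(0))=\max_{q\in\subdiff{u}{0}}\inner{q}{v}$.

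Since $0$ is a boundary point of the compact convex set $\subdiff{u}{0}$, choose a unit vector $v$ with $\inner{q}{v}\le 0$ on $\subdiff{u}{0}$, so that $\max_{q\in\subdiff{u}{0}}\inner{q}{v}=0$; thus $u$ has zero directional derivative at $0$ in the direction $v$. For small $t>0$ the map $s\mapsto u(sv)$ is $C^1$ on $(0,t]$, hence $t^{-1}(u(tv)-u(0))=t^{-1}\int_0^t\inner{\gradu(sv)}{v}\,ds$, and the left side tends to $0$. Because $s\mapsto\inner{\gradu(sv)}{v}$ is continuous on $(0,\delta)$ and its averages over $(0,t)$ tend to $0$ as $t\to 0^+$, the intermediate value theorem forces $0$ to be a subsequential limit of it at $0$: there are $s_j\downarrow 0$ with $\inner{\gradu(s_jv)}{v}\to 0$. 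The $\gradu(s_jv)$ are bounded (local Lipschitz bound), so along a subsequence $\gradu(s_jv)\to q^\ast$; upper semicontinuity of the subdifferential gives $q^\ast\in\subdiff{u}{0}$, and $\inner{q^\ast}{v}=0$, so $q^\ast$ lies in the exposed face $F:=\{q\in\subdiff{u}{0}:\inner{q}{v}=0\}$, which contains $0$. In particular, if $v$ can be chosen so that $F=\{0\}$ (i.e. $0$ is an \emph{exposed} point of $\subdiff{u}{0}$), we are done with $x_k:=s_kv$.

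For a general boundary point $p=0$, which may only lie in the relative interior of a proper face $F$ of $\subdiff{u}{0}$, I would iterate the above, perturbing the direction: replacing $v$ by $v+\epsilon w$ with $w$ exposing a strictly smaller face of $F$ and running the mean value step again, one obtains limit--gradients arbitrarily close to every exposed point of $F$. To reach the point $0$ in the relative interior of $F$, I would then exploit the continuity of $v\mapsto\gradu(sv)$ on the sphere: for small $s$, $\gradu$ maps the connected sphere $\partial\ball{s}{0}$ continuously into a set clustering around $\subdiff{u}{0}$, so an intermediate value argument organized along paths of directions lets one interpolate between the reached exposed points of $F$ and pass within $\epsilon$ of $0$; a diagonal choice of $s_k\downarrow 0$ and $v_k$ then gives $x_k:=s_kv_k\to 0$ with $\gradu(x_k)\to 0=p$.

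The crux is exactly this last step. The mean value argument only controls the component of $\gradu$ along the exposing direction, so upgrading ``every exposed point of $F$ is reached'' to ``every point of the relative interior of $F$ is reached'' is a genuinely topological matter: one must show that for small $s$ the $\gradu$-image of $\partial\ball{s}{0}$ is a connected set sweeping a full relative neighborhood of all of $\subdiffbdry{u}{0}$, not merely of its extreme/exposed points. I expect this to require either a degree/intermediate-value argument indexed by the normal fan of $\subdiff{u}{0}$, or an induction on the dimension of the minimal face containing $p$ with the exposing direction perturbed inside the relative interior of the associated normal cone; in either case the delicate point is keeping the perturbations uniform enough to extract a single convergent sequence. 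The remaining inputs --- local Lipschitz continuity and $C^1$-regularity of semi-convex functions on their regular sets, the directional-derivative/support-function identity, and upper semicontinuity and convexity of the subdifferential --- are routine.
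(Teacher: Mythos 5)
The first half of your argument --- the normalization, the $C^1$ regularity of $u$ on the punctured neighborhood, the identity between the one-sided directional derivative and the support function of $\subdiff{u}{0}$, and the mean-value/intermediate-value step along the ray $s\mapsto sv$ --- is sound, and it does prove the claim when $p$ is an \emph{exposed} point of $\subdiff{u}{x_0}$ (i.e.\ when the exposing direction $v$ can be chosen so that the face $F=\{q\in\subdiff{u}{0}:\inner{q}{v}=0\}$ reduces to $\{p\}$). But the proposition asserts the conclusion for \emph{every} boundary point, and the passage from exposed points to arbitrary boundary points is exactly where you stop proving and start conjecturing. This is a genuine gap, not a routine verification: your mean-value computation only ever controls the single scalar $\inner{\gradu(sv)}{v}$, so it can only place a limit gradient \emph{somewhere} in the exposed face $F$, never at a prescribed point of the relative interior of $F$; and mere connectedness of $\gradu\paren{\partial\ball{s}{0}}$ is not enough, since a compact connected set containing all extreme points of its convex hull need not contain the boundary of that hull (take the union of two edges of a triangle). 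It is worth noting that the part you do establish is essentially available without the isolated-singularity hypothesis (extreme points of the subdifferential of a semi-convex function are always reachable gradients), whereas the full statement is \emph{false} without isolation: for $u(x)=\norm{x_1}+\norm{x_2}$ at the origin, $(1,0)\in\subdiffbdry{u}{0}$ but the only limits of nearby gradients are the four vertices $(\pm1,\pm1)$. So the step you leave open is precisely the one that must use isolation in an essential, global way, and your sketch of ``perturbing the exposing direction'' plus ``an intermediate value argument organized along paths of directions'' does not yet identify the mechanism that rules out the triangle-type degeneration.

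For what it is worth, the result is quoted here from Albano and Cannarsa, whose proof runs in the opposite direction from yours: they show by contraposition that if some $p\in\subdiffbdry{u}{x_0}$ is \emph{not} a limit of gradients, then one can construct (via an approximation and fixed-point argument) a Lipschitz arc of singular points emanating from $x_0$; an isolated singularity admits no such arc, so every boundary point of the subdifferential must be reachable. If you wish to complete your direct route instead, the missing ingredient is a genuine degree-theoretic statement about the map $\gradu$ restricted to small punctured balls (not just connectedness of its image), and that would need to be formulated and proved, not assumed.
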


The next result excludes having a full dimensional subdifferential at an isolated singular point, when the target domain contains no holes. Note that the result can be shown under just the condition~\eqref{A1}, and can be strengthened under ~\eqref{A2} and~\eqref{A3w}. We also comment that this will be the only place where we use the no-hole condition on $\target$, for the  proof of Theorem~\ref{thm: no isolated singularity in euclidean case}.

\begin{prop}\label{prop: no n dim subdifferential}
Suppose that $c$ is $C^1$ and satisfies~\eqref{A1}, $u$ is a $c$-convex Brenier solution, and $\target$ contains no holes. Then $u$ cannot have any isolated singular point $x_0\in\sourceint$ with $\affdim{\subdiff{u}{x_0}}=n$ (here $\affdim$ is the affine dimension of a convex set).

If in addition, $c$ satisfies~\eqref{A0},~\eqref{A2}, and~\eqref{A3w}, and $\sourcedom$ and $\targetdom$ are $c$-convex with respect to each other, we obtain the same conclusion under the weaker condition that $\target$ contains no holes $c$-convex with respect to $x_0$.
\end{prop}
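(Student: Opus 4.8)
The plan is to argue by contradiction: suppose $x_0 \in \sourceint$ is an isolated singular point of $u$ with $\affdim \subdiff{u}{x_0} = n$. Since $x_0$ is an isolated singularity, there is a neighborhood $\nbhd$ of $x_0$ on which $u$ is differentiable off $x_0$, so Proposition~\ref{thm: singularity propagation} applies: every $p \in \subdiffbdry{u}{x_0}$ is a limit $\gradu(x_k) \to p$ along some $x_k \to x_0$. The key observation is that, because $u$ is a Brenier solution, every gradient value $\gradu(x_k)$ corresponds via the $c$-exponential map to a point $\Breniermap(x_k) \in \target$; hence the entire boundary image $\coord{\subdiff{u}{x_0}^\partial}{x_0}$ — which, under \eqref{A1}, maps homeomorphically onto $\bdry\bigl(\csubdiff{u}{x_0}\bigr)$ by Corollary~\ref{cor: c-convex c-subdifferential} (in the first part, one works directly with the subdifferential/$c$-subdifferential correspondence) — lies in the closure of $\target$. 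Since $\subdiff{u}{x_0}$ is $n$-dimensional and convex, its boundary is (homeomorphic to) the sphere $\S^{n-1}$, and the corresponding set $\hole := \cExp{x_0}{\bigl(\subdiffint{u}{x_0}\bigr)}$ is a bounded open set whose boundary lies in $\target^{\cl}$.

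The heart of the argument is then to show that $\hole$ (or a suitable piece of it) is a \emph{hole} in $\target$ in the sense of Definition~\ref{def: holes}, contradicting the no-hole hypothesis. For this I would argue that the interior of $\hole$ cannot meet $\targetint$: if $\xbar \in \csubdiffint{u}{x_0}$ were in $\targetint$, then by Lemma~\ref{lem: c-cones} applied to a small section of $u$ at $x_0$ — or more directly by the structure of the $c$-subdifferential of a $c$-affine support — one would be able to produce a $c$-affine function supporting $u$ from below at $x_0$ with focus an interior point, and then slide the focus; combined with the fact that a Brenier map cannot send positive mass into $\targetint$ off $\source$ while $x_0$ is a single point (measure zero), one derives that $\csubdiff{u}{x_0}$ contributes no mass, yet the existence of an interior focus at an isolated singularity forces differentiability, a contradiction. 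More carefully, the clean route is: the $c$-subdifferential $\csubdiff{u}{x_0}$ has positive $n$-dimensional measure (being the $c$-exponential image of the $n$-dimensional convex set $\subdiff{u}{x_0}$, and $\cExp{x_0}{\cdot}$ is a diffeomorphism under \eqref{A2}); but by the definition of Brenier solution and the fact that $\{x_0\}$ is $\sourcemeas$-null, the image $\Breniermap(\source)$ carries full $\targetmeas$ mass and $\csubdiff{u}{x_0}$ can only intersect $\source$ in a set of measure zero hitting the target density — forcing $\csubdiff{u}{x_0} \cap \target^{\interior}$ to be $\targetmeas$-null, hence (by the lower density bound \eqref{eqn: density bound}) Lebesgue-null, hence empty up to its boundary. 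Thus $\hole := \csubdiffint{u}{x_0} \setminus \target$ is a nonempty open set with $\hole \cap \targetint = \emptyset$ and $\holebdry \subset \targetbdry$, i.e. a hole.

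For the strengthened conclusion under \eqref{A0}, \eqref{A2}, \eqref{A3w} and mutual $c$-convexity, I would use Corollary~\ref{cor: c-convex c-subdifferential} to replace $\subdiff{u}{x_0}$ by $\csubdiff{u}{x_0}$ while retaining that it is $c$-convex with respect to $x_0$; then the hole $\hole$ constructed above is $c$-convex with respect to $x_0$ as well (being an intersection-type construction built from $\csubdiff{u}{x_0}$, whose image under $\coord{\cdot}{x_0}$ is convex), so we only contradict the weaker hypothesis that there are no holes $c$-convex with respect to $x_0$. The main obstacle I anticipate is the bookkeeping in the second paragraph: rigorously showing that the $n$-dimensional $c$-subdifferential at the single point $x_0$ cannot overlap $\targetint$ in positive measure requires carefully combining the pushforward condition $\Breniermap_\#\sourcemeas = \targetmeas$ with the fact that $\Breniermap$ is single-valued a.e. and $x_0$ is isolated, and then using the two-sided density bounds \eqref{eqn: density bound} to pass from "$\targetmeas$-null" to "empty interior"; ensuring that the resulting set is genuinely open, connected (or contains a connected component that is), bounded, and has boundary inside $\targetbdry$ is the delicate part. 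The other technical point to handle with care is the homeomorphism $\coord{\cdot}{x_0}: \csubdiff{u}{x_0} \to$ convex set and the diffeomorphism property of $\cExp{x_0}{\cdot}$, which is exactly what \eqref{A1} and \eqref{A2} buy us and which is needed to transfer convexity/topology between the cotangent picture and $\targetdom$.
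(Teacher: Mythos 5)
Your overall strategy matches the paper's: use the Albano--Cannarsa propagation result to place $\cExp{x_0}{\subdiffbdry{u}{x_0}}$ inside $\target$, and then exhibit the open set $\cExp{x_0}{\subdiffint{u}{x_0}}$ as a hole. But there is a genuine gap at the decisive step, namely your claim that ``by the definition of Brenier solution and the fact that $\{x_0\}$ is $\sourcemeas$-null'' the interior of the $c$-subdifferential at $x_0$ must be $\targetmeas$-null. The pushforward condition only gives $\targetmeas(A)=\sourcemeas\paren{\Breniermap^{-1}(A)}$, and $\Breniermap^{-1}\paren{\cExp{x_0}{\subdiffint{u}{x_0}}}$ is not a priori contained in $\{x_0\}$: a point $\xbar$ in that open set could, for all you have shown, also lie in $\csubdiff{u}{x_1}$ for a positive-measure set of other points $x_1$, i.e.\ be hit by the Brenier map away from $x_0$. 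The missing ingredient --- the heart of the paper's proof --- is the claim that $\cExp{x_0}{\subdiffint{u}{x_0}}\cap\csubdiff{u}{x_1}=\emptyset$ for every $x_1\neq x_0$. The paper proves this from $c$-monotonicity together with \eqref{A1}: the function $F(\xbar):=c(x_0,\xbar)-c(x_1,\xbar)$ has nonvanishing differential, hence attains its maximum over the compact set $\cExp{x_0}{\subdiff{u}{x_0}}$ only on the boundary, at some $\xbar_0\in\csubdiff{u}{x_0}$; any $\xbar_1$ in the interior lying in $\csubdiff{u}{x_1}$ would then violate the $c$-monotonicity of the $c$-subdifferential of $u$. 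Your proposal never supplies this step or any substitute for it, so the conclusion that $\csubdiffint{u}{x_0}$ is disjoint from $\targetint$ is unsupported.

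Two smaller points. First, once disjointness from every other $c$-subdifferential is established you do not need the density bounds \eqref{eqn: density bound} (which are not hypotheses of this proposition): an open $\targetmeas$-null set is automatically disjoint from $\spt\targetmeas$. Second, in the first part only $C^1$ and \eqref{A1} are assumed, so you cannot invoke Corollary~\ref{cor: c-convex c-subdifferential} (which needs \eqref{A3w}) to identify $\cExp{x_0}{\subdiff{u}{x_0}}$ with $\csubdiff{u}{x_0}$; the paper instead uses only Brouwer's invariance of domain to see that $\cExp{x_0}{\subdiffint{u}{x_0}}$ is open, bounded and connected, reserving that identification and the $c$-convexity of the hole for the second part. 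Relatedly, defining the hole as a set difference $\csubdiffint{u}{x_0}\setminus\target$ risks losing connectedness; the paper's argument shows the entire open connected set $\cExp{x_0}{\subdiffint{u}{x_0}}$ misses $\target$, so no such truncation is needed.
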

\begin{proof}
Suppose by contradiction that $x_0\in\sourceint$ is an isolated singular point of $u$, and the affine dimension of $\subdiff{u}{x_0}$ is $n$. Since $c$ is $C^1$ and satisfies~\eqref{A1}, the mapping $\cExp{x_0}{\cdot}$ is continuous and injective,
thus Brouwer's invariance of domain theorem (see~\cite{Bro71}) gives that $\cExp{x_0}{\cdot}$ is a homeomorphism between the open set $\subdiffint{u}{x_0}$ and its image. In particular, $\cExp{x_0}{\subdiffint{u}{x_0}}$ is a nonempty, open, bounded, connected set. Then since $x_0$ is an isolated singularity, by Proposition~\ref{thm: singularity propagation} we have 
\begin{align}\label{eqn: boundary of subdifferential}
\cExp{x_0}{\subdiffbdry{u}{x_0}}\subset \target\cap \csubdiff{u}{x_0},
\end{align}
 as $\gradu (\dom {\gradu}) \subset \target$ for the  Brenier solution $u$.

We now claim that 
\begin{align}\label{eqn: no other points in interior subdifferential}
\cExp{x_0}{\subdiffint{u}{x_0}}\cap \csubdiff{u}{x_1}=\emptyset
\end{align}
for any $x_1\neq x_0$. First, fix such an $x_1\in\sourcedom$ and define
\begin{align*}
 F(\xbar):=c(x_0, \xbar)-c(x_1, \xbar),
\end{align*}
which is a $C^1$ function satisfying $DF(\xbar)\neq 0$ for any $\xbar$ (by~\eqref{A1}). In particular, $F$ cannot attain its maximum over the compact set $\cExp{x_0}{\subdiff{u}{x_0}}$ except at the boundary, say at $\xbar_0\in\cExp{x_0}{\subdiffbdry{u}{x_0}}\subset \csubdiff{u}{x_0}$. Thus if there exists $\xbar_1\in \cExp{x_0}{\subdiffint{u}{x_0}}\cap \csubdiff{u}{x_1}$, this would imply that
\begin{align*}
 F(\xbar_1)&<F(\xbar_0)\\
 \iff c(x_0, \xbar_1)-c(x_1, \xbar_1)&<c(x_0, \xbar_0)-c(x_1, \xbar_0)\\
 \iff c(x_0, \xbar_1)+c(x_1, \xbar_0)&<c(x_0, \xbar_0)+c(x_1, \xbar_1),
\end{align*} 
which is a violation of $c$-monotonicity of the $c$-subdifferential of $u$ (see Remark~\ref{rmk: c-subdifferential remarks}). As a result there cannot be such an $\xbar_1$, and we obtain~\eqref{eqn: no other points in interior subdifferential}.
%
%
 Since $\targetmeas=\Breniermap_{\#}\sourcemeas$, we must then have $$\cExp{x_0}{\subdiffint{u}{x_0}}\cap\target=\emptyset.$$ However, when combined with~\eqref{eqn: boundary of subdifferential} this exactly implies that $\cExp{x_0}{\subdiffint{u}{x_0}}$ is a hole in $\target$ which contradicts our initial assumption, therefore it must be that $\affdim{\subdiff{u}{x_0}}<n$.

If $c$ also satisfies~\eqref{A0},~\eqref{A2}, and~\eqref{A3w}, by Corollary~\ref{cor: c-convex c-subdifferential} we have that $\cExp{x_0}{\subdiffint{u}{x_0}}=\csubdiffint{u}{x_0}$ and is $c$-convex with respect to $x_0$; the conclusion thus follows from the same proof as above.
\end{proof}

In the next lemma, we extend to $c$-convex functions the following easy result about convex functions: if a convex function $u$ makes contact with an affine function along a line segment containing a point $x_0$, then either $u$ is singular along the line segment or the gradient of the affine function is an exposed point of the convex set $\subdiff{u}{x_0}$. Our extension is, in particular, to cost functions such that Loeper's maximum principle, Theorem \ref{thm: DASM} holds.

\begin{lem}\label{lem: contact set is singleton with MTW}
Suppose that $c$ satisfies~\eqref{A0},~\eqref{A1},~\eqref{A2}, and~\eqref{A3w} (so that Loeper's maximum principle, Theorem \ref{thm: DASM} \eqref{eq:LoeperMP} and its consequences, Corollary \ref{cor: c-convex c-subdifferential} \eqref{eq:c-sub is sub} and Corollary~\ref{cor: c-convex sublevelset} hold). 
Also let 
$u$ be a $c$-convex function on $\sourcedom$ 
and assume that $x_0$ is an isolated singular point of $u$. 
Then if $\pbar_0$ is not an extremal point of the convex set  $\coord{\csubdiff{u}{x_0}}{x_0}$ 
and $\xbar_0:=\cExp{x_0}{\pbar_0}$, the contact set
\begin{align*}
 \subzero:=\curly{x\in\sourcedom\mid u(x)=-c(x, \xbar_0)+c(x_0, \xbar_0)+u(x_0)}
\end{align*}
consists only of the single point $x_0$.
\end{lem}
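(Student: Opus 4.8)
The plan is to mimic the classical convex-analysis fact, with Loeper's maximum principle playing the role of linearity. Since $\pbar_0\in\coord{\csubdiff{u}{x_0}}{x_0}$, we have $\xbar_0=\cExp{x_0}{\pbar_0}\in\csubdiff{u}{x_0}$, so the $c$-affine function $m_0:=-c(\cdot,\xbar_0)+c(x_0,\xbar_0)+u(x_0)$ supports $u$ from below on all of $\sourcedom$; hence $\subzero$ is exactly the section $\{x\in\sourcedom\mid u(x)\le m_0(x)\}$, which by Corollary~\ref{cor: c-convex sublevelset} is $c$-convex with respect to $\xbar_0$, i.e. $\coord{\subzero}{\xbar_0}\subset\cotanspMbar{\xbar_0}$ is convex. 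Assume for contradiction $\subzero\neq\{x_0\}$. The map $x\mapsto-\Dbar c(x,\xbar_0)$ is continuous and injective by~\eqref{A1} and has invertible differential by~\eqref{A2}, so it is a homeomorphism of $\sourcedom$ onto an open subset of $\cotanspMbar{\xbar_0}$; since $\coord{\subzero}{\xbar_0}$ is convex and contains the two distinct points $-\Dbar c(x_0,\xbar_0)$ and $-\Dbar c(x',\xbar_0)$ for some $x'\in\subzero\setminus\{x_0\}$, it contains a nondegenerate segment emanating from $-\Dbar c(x_0,\xbar_0)$, and pulling this segment back through the homeomorphism produces points of $\subzero$ arbitrarily close to, but distinct from, $x_0$. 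Because $x_0$ is an isolated singular point, I can then select such a point $x_1$ at which $u$ is differentiable.

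Next I would exploit the non-extremality of $\pbar_0$. Since $x_1\in\subzero$, the function $m_0$ supports $u$ from below with equality at $x_1$, so $\xbar_0\in\csubdiff{u}{x_1}$; and as $u$ is differentiable at $x_1$, Corollary~\ref{cor: c-convex c-subdifferential} together with Remark~\ref{rmk: c-subdifferential remarks} gives $\coord{\csubdiff{u}{x_1}}{x_1}=\subdiff{u}{x_1}=\{\gradu(x_1)\}$, so $\csubdiff{u}{x_1}$ is the singleton $\{\xbar_0\}$. Write $\pbar_0=\tfrac12(\pbar_1+\pbar_2)$ with $\pbar_1\neq\pbar_2$ both in the convex set $\coord{\csubdiff{u}{x_0}}{x_0}\subset\targetdomcoord{x_0}$, and set $\xbar_i:=\cExp{x_0}{\pbar_i}\in\csubdiff{u}{x_0}$. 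Applying $\xbar_i\in\csubdiff{u}{x_0}$ at $y=x_1$ and using $u(x_1)=m_0(x_1)$ yields $c(x_0,\xbar_i)-c(x_1,\xbar_i)\le c(x_0,\xbar_0)-c(x_1,\xbar_0)$ for $i=1,2$. On the other hand, Loeper's maximum principle (Theorem~\ref{thm: DASM}) with base point $x_0$, endpoints $\pbar_1,\pbar_2$, parameter $t=\tfrac12$ (the segment stays in $\targetdomcoord{x_0}$ by $c$-convexity of $\targetdom$ with respect to $x_0$), and test point $x=x_1$ gives $c(x_0,\xbar_0)-c(x_1,\xbar_0)\le\max_{i=1,2}\bigl(c(x_0,\xbar_i)-c(x_1,\xbar_i)\bigr)$. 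Combining, there is $j\in\{1,2\}$ with $c(x_0,\xbar_j)-c(x_1,\xbar_j)=c(x_0,\xbar_0)-c(x_1,\xbar_0)$.

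Finally, feeding this equality and $u(x_1)=-c(x_1,\xbar_0)+c(x_0,\xbar_0)+u(x_0)$ into the inequality $-c(y,\xbar_j)+c(x_0,\xbar_j)+u(x_0)\le u(y)$ (valid since $\xbar_j\in\csubdiff{u}{x_0}$) rewrites it as $-c(y,\xbar_j)+c(x_1,\xbar_j)+u(x_1)\le u(y)$ for all $y\in\sourcedom$, i.e. $\xbar_j\in\csubdiff{u}{x_1}=\{\xbar_0\}$. Hence $\xbar_j=\xbar_0$, so $\pbar_j=\pbar_0$ by injectivity of $\cExp{x_0}{\cdot}$ from~\eqref{A1}; but $\pbar_j\in\{\pbar_1,\pbar_2\}$ while $\pbar_0=\tfrac12(\pbar_1+\pbar_2)$ with $\pbar_1\neq\pbar_2$, so $\pbar_0\notin\{\pbar_1,\pbar_2\}$, a contradiction. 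Therefore $\subzero=\{x_0\}$. The step I expect to be the crux — and the only place~\eqref{A3w} enters — is the confrontation in the second paragraph between the $c$-monotonicity inequalities from $\csubdiff{u}{x_0}$ and the opposite inequality supplied by Loeper's maximum principle; the remainder is the $c$-convex bookkeeping required to locate a differentiability point of $u$ inside the contact set.
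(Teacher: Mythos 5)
Your proof is correct and follows essentially the same route as the paper's: write $\pbar_0$ as the midpoint of $\pbar_1,\pbar_2\in\coord{\csubdiff{u}{x_0}}{x_0}$, use the $c$-convexity of the section to produce points of $\subzero\setminus\{x_0\}$ near $x_0$, and play the support inequalities from $\xbar_i\in\csubdiff{u}{x_0}$ against Loeper's maximum principle at $t=\tfrac12$ to force some $\xbar_j\neq\xbar_0$ into the $c$-subdifferential at such a point. The only (immaterial) difference is that you derive the contradiction at a single differentiability point $x_1$, whereas the paper shows every point of the $c$-segment from $x_0$ to $x_1$ carries two distinct $c$-supports and hence is singular, contradicting isolatedness directly.
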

\begin{proof}
 Fix a  $\pbar_0$ that is not an extremal point of $\coord{\csubdiff{u}{x_0}}{x_0}$.  There exist $\pbar_{\pm}\neq \pbar_0$ such that $\pbar_{\pm}\in\coord{\csubdiff{u}{x_0}}{x_0}$ and $\pbar_0=\frac{1}{2}(\pbar_++\pbar_-)$; let us write $\xbar_{\pm}:=\cExp{x_0}{\pbar_{\pm}}$.

Now, suppose by contradiction that there exists some $x_1\in\subzero$ with $x_1\neq x_0$.  
Consider the  $c$-segment $x(\lambda):=\cExp{\xbar_0}{(1-\lambda)\p_0+\lambda\p_1}$, for $\lambda\in[0, 1]$ from $x_0$ to $x_1$; observe from Corollary~\ref{cor: c-convex sublevelset} that $x(\lambda) \in \subzero$ for all $\lambda \in [0,1]$. 
Also using that $\xbar_\pm\in\csubdiff{u}{x_0}$,   
we must have 
\begin{align}\label{eq: subdiff x pm}
 \max\curly{-c(x, \xbar_+)+c(x_0, \xbar_+),\ -c(x, \xbar_-)+c(x_0, \xbar_-)}+u(x_0) \le u(x)
\end{align}
for all $x \in \sourcedom$.
In particular, 
\begin{align*}
 -c(x(\lambda), \xbar_\pm)+c(x_0, \xbar_\pm)+u(x_0)
 &\leq u(x(\lambda))\\
 & = - c(x(\lambda), \xbar_0)+c(x_0, \xbar_0)+u(x_0)
\end{align*}
for all $\lambda\in[0, 1]$. At the same time by using Theorem~\ref{thm: DASM} \eqref{eq:LoeperMP},
\begin{align*}
 -c(x(\lambda), \xbar_0)+c(x_0, \xbar_0)&\leq \max\curly{-c(x(\lambda), \xbar_+)+c(x_0, \xbar_+),\ -c(x(\lambda), \xbar_-)+c(x_0, \xbar_-)},
\end{align*}
thus by combining these we must have the equality
\begin{align*}
 \max\curly{-c(x(\lambda), \xbar_+)+c(x_0, \xbar_+),\ -c(x(\lambda), \xbar_-)+c(x_0, \xbar_-)}+u(x_0) &=u(x(\lambda))
\end{align*}
for all $\lambda \in[0, 1]$. 
 Together with \eqref{eq: subdiff x pm},  this implies that for each $\lambda \in [0,1]$, either $\xbar_+ \in \csubdiff{u}{x(\lambda)}$ or $\xbar_- \in \csubdiff{u}{x(\lambda)}$. Since $\xbar_+$, $\xbar_-\ne \xbar_0$ by construction, and clearly $\xbar_0\in \csubdiff{u}{x(\lambda)}$ for all $\lambda\in[0, 1]$ this implies all points $x(\lambda)$ in the $c$-segment must be singular points, contradicting that $x_0$ is an isolated singular point. This proves $\subzero = \curly{x_0}$.
\end{proof}

In order to prove the main theorem, we require a modified version of the estimate~\cite[Lemma 6.10]{FKM11} 
(this is proven in the same vein as~\cite[Proposition 1]{FK10} for the Euclidean case of $c(x, \xbar)=-\euclidean{x}{\xbar}$). By the notation $\Leb{\cdot}$, we denote the volume of a set in $\M$, $\Mbar$ or an associated cotangent space, induced by the Riemannian metric on either $\M$ or $\Mbar$ (which will be clear from context).

\begin{lem}\label{lem: modified aleksandrov}
 Suppose $c$, $u$, $\sourcedom$, $\targetdom$, $\sourcemeas$, and $\targetmeas$ satisfy the conditions of Theorem~\ref{thm: no isolated singularity in euclidean case}.
%
  Also let $\mountain_0$ be a $c$-affine function with focus $\xbar_0$, let $\subzero:=\curly{u\leq \mountain_0}$ with $\subzero\cap\sourcedombdry=\emptyset$, fix two parallel planes $\plane{+}$ and $\plane{-}$ in $\cotanspMbar{\xbar_0}$ supporting the (convex) set $\subzerocoord$ from opposite sides, and let $\lineseg$ be the length of the longest line segment orthogonal to $\plane{\pm}$ that is contained in $\subzerocoord$. Finally, suppose for some $\delta>0$, $x_0\in\subzeroint$ is such that there exists $\pbar_\delta\in \coord{\csubdiff{u}{x_0}\cap \targetint}{x_0}$ with $\dist{\paren{\pbar_\delta, \coord{\targetbdry}{x_0}}}\geq\delta$. Then (writing $p_0:=-\Dbar c(x_0, \xbar_0)$) there exists a constant $C>0$ depending only on $\delta$, $n$, $\Lambda$, $\diam{\paren{\target}}$, and $c$ such that
\begin{align*}
 \paren{\mountain_0(x_0)-u(x_0)}^n\leq\frac{C\min{\curly{\dist{\paren{p_0, \plane{+}}}, \dist{\paren{p_0, \plane{-}}}}}}{\lineseg}\Leb{\subzero}^2
\end{align*}
\end{lem}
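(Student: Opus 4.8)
# Proof Proposal for Lemma~\ref{lem: modified aleksandrov}

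The plan is to reduce the estimate to a volume/Jacobian bound for the $c$-subdifferential map on a sub-section, in the spirit of the classical Aleksandrov estimate and its sharper anisotropic refinement from \cite{FK10, FKM11}. First I would pass to coordinates at $\xbar_0$ via the map $x \mapsto \coord{x}{\xbar_0} = -\Dbar c(x, \xbar_0)$, so that $\subzerocoord$ becomes a bounded convex set in $\cotanspMbar{\xbar_0}$ and $\mountain_0(x_0) - u(x_0)$ becomes (up to controlled multiplicative constants coming from the $C^4$ cost) the ``height'' of a convex-type function at the image point $p_0$. Because $c$ satisfies \eqref{A0}, \eqref{A1}, \eqref{A2}, this change of variables is bi-Lipschitz with constants depending only on $c$ and $\diam(\target)$, and it distorts $\Leb{\cdot}$, the distances $\dist(p_0, \plane{\pm})$, and $\lineseg$ only by such constants; so it suffices to prove the inequality in these coordinates.

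The core of the argument is a lower bound on the measure $\csubdiff{u}{\subzero}$ (equivalently, on $\targetmeas(\csubdiff{u}{\subzero})$ via the density bound $g \le \Lambda$, and then on $\sourcemeas(\subzero) \ge \Lambda^{-1}\Leb{\subzero}$ using $\Breniermap_\#\sourcemeas = \targetmeas$ and $\Breniermap(x) \in \csubdiff{u}{x}$ for a.e.\ $x$). Here is where Lemma~\ref{lem: c-cones} enters: letting $\ccone$ be the $c$-cone over $\subzero$ with vertex $x_0$, \eqref{eqn: c-cone inclusions} gives $\csubdiff{\ccone}{x_0} \subset \csubdiff{u}{\subzero}$, so it is enough to estimate from below the volume of $\coord{\csubdiff{\ccone}{x_0}}{x_0}$. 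The hypothesis furnishing $\pbar_\delta \in \coord{\csubdiff{u}{x_0} \cap \targetint}{x_0}$ with $\dist(\pbar_\delta, \coord{\targetbdry}{x_0}) \ge \delta$ is precisely what lets one build a genuinely $n$-dimensional ``spike'' of $c$-affine supports: one takes $c$-affine functions with foci $\cExp{x_0}{\pbar}$ for $\pbar$ ranging over a $\delta$-ball around $\pbar_\delta$, shifted down so that at $x_0$ they equal $u(x_0)$ and at $\subzerobdry$ they lie below $\mountain_0$ — exactly the admissibility argument used in the proof of \eqref{eqn: interior of c-cone subdiff}. The $c$-subdifferential of $\ccone$ at $x_0$ then contains a set which, in coordinates, is comparable to the ``cone'' of directions spanned by $\mountain_0(x_0) - u(x_0)$ in the $\plane{\pm}$-direction over the base $\subzerocoord$; a direct computation shows its volume is bounded below by a constant times
\[
\frac{(\mountain_0(x_0) - u(x_0)) \cdot \Leb{\subzero}}{\lineseg \cdot \min\{\dist(p_0, \plane{+}), \dist(p_0, \plane{-})\}} \cdot \frac{1}{\Leb{\subzero}^{?}},
\]
where the anisotropic factor $\lineseg \cdot \min\{\dist(p_0, \plane{+}), \dist(p_0, \plane{-})\}$ is what replaces the crude $(\diam \subzero)^2$ in the classical Aleksandrov bound — this is the content of \cite[Lemma 6.10]{FKM11} / \cite[Proposition 1]{FK10}. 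Combining $\Leb{\subzero} \gtrsim \sourcemeas(\subzero) = \targetmeas(\csubdiff{u}{\subzero}) \ge \targetmeas(\csubdiff{\ccone}{x_0}) \gtrsim \Lambda^{-1}\Leb{\coord{\csubdiff{\ccone}{x_0}}{x_0}}$ with this lower bound, and rearranging, yields $(\mountain_0(x_0) - u(x_0))^n \le C \min\{\dist(p_0,\plane{+}), \dist(p_0,\plane{-})\}\Leb{\subzero}^2 / \lineseg$ after tracking the powers of $\Leb{\subzero}$ correctly.

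The main obstacle I anticipate is making the anisotropic volume estimate precise: one must show that the image under $\coord{\cdot}{x_0}$ of the slab of admissible $c$-affine foci is not merely contained in, but actually \emph{fills up} a region of the claimed volume, which requires that the ``vertex'' $p_0$ sit at height $\mountain_0(x_0) - u(x_0)$ above the base along both supporting planes and that the base $\subzerocoord$ genuinely has ``width'' $\lineseg$ transverse to $\plane{\pm}$. In the Euclidean case this is elementary convex geometry (slice the cone by hyperplanes parallel to $\plane{\pm}$), but for general $c$ one needs the $c$-convexity of sections (Corollary~\ref{cor: c-convex sublevelset}) and the Loeper maximum principle to guarantee the relevant sets are convex in the right coordinates, plus uniform control of the Jacobians of $\cExp{x_0}{\cdot}$ and of $x \mapsto \coord{x}{\xbar_0}$ to transfer the Euclidean computation. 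The density hypothesis \eqref{eqn: density bound} and the assumption $\target \cap \targetdombdry = \emptyset$ are only used, respectively, to pass between Lebesgue and $\sourcemeas, \targetmeas$, and to ensure $\pbar_\delta$ can be taken in the interior so that the spike of foci stays inside $\target$; neither affects the geometric heart of the estimate. I would organize the writeup so that the bi-Lipschitz reduction is stated first, the Euclidean anisotropic estimate (or a direct citation to \cite{FKM11, FK10}) second, and the mass-transport chain of inequalities last.
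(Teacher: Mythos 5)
Your overall architecture---a purely geometric anisotropic lower bound on $\Leb{\csubdiffcoord{\ccone}{x_0}}$ combined with a mass-balance upper bound---is the same as the paper's, but the last link in your chain of inequalities,
\[
\targetmeas\paren{\csubdiff{\ccone}{x_0}}\gtrsim \Lambda^{-1}\Leb{\csubdiffcoord{\ccone}{x_0}},
\]
is a genuine gap. The density bound $g\geq\Lambda^{-1}$ holds only on $\target$, and nothing in the hypotheses of the lemma forces $\csubdiff{\ccone}{x_0}\subset\target$: the whole premise of the paper is that $\target$ need not be $c$-convex with respect to $x_0$, so the convex set $\csubdiffcoord{\ccone}{x_0}$ may protrude outside $\coord{\target}{x_0}$, where $\targetmeas$ sees nothing. (The containment does happen to hold in the eventual application inside the main theorem, via \eqref{eq: inside target}, but it is not among the lemma's hypotheses.) What the mass-balance argument of \cite[Lemma 3.4]{FK10} actually delivers is $\Leb{\coord{\csubdiff{u}{\subzero}}{x_0}\cap\coord{\target}{x_0}}\leq \Lambda^{2}C\Leb{\subzero}$, with the intersection with the target; you still owe a comparison between the full volume $\Leb{\csubdiffcoord{\ccone}{x_0}}$ and the part that lies in $\coord{\target}{x_0}$.

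That comparison is the true role of the hypothesis on $\pbar_\delta$, which you have assigned to the wrong half of the proof. The anisotropic lower bound of \cite[Lemma 6.10]{FKM11} on the \emph{full} volume of $\csubdiffcoord{\ccone}{x_0}$ is purely geometric (it needs only $\mountain_0(x_0)-u(x_0)$, $\lineseg$, and the supporting planes $\plane{\pm}$) and does not use $\pbar_\delta$ at all. Instead, since $\pbar_\delta\in\csubdiffcoord{u}{x_0}\subset\csubdiffcoord{\ccone}{x_0}$ and $\ball{\delta}{\pbar_\delta}\subset\coord{\target}{x_0}$, the convexity of $\csubdiffcoord{\ccone}{x_0}$ (Corollary~\ref{cor: c-convex c-subdifferential}, which is where \eqref{A3w} enters) together with \cite[Lemma 3.1]{FK10} yields
\[
\Leb{\csubdiffcoord{\ccone}{x_0}}\leq C(\delta,\diam\paren{\target})\,\Leb{\csubdiffcoord{\ccone}{x_0}\cap\ball{\delta}{\pbar_\delta}}\leq C(\delta,\diam\paren{\target})\,\Leb{\coord{\csubdiff{u}{\subzero}}{x_0}\cap\coord{\target}{x_0}},
\]
and only after this step does the mass balance close the loop. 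Without it, your argument proves nothing beyond the $c$-convex-target case already covered by \cite{FKM11}; with it, the rest of your outline (the bi-Lipschitz reduction, the citation of the anisotropic estimate, and the power counting you left as ``$?$'') goes through as you describe.
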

\begin{proof}
First, one can use~\eqref{eqn: density bound} and follow a proof analogous to~\cite[Lemma 3.4]{FK10} (using Remark~\ref{rmk: c-subdifferential remarks}, and replacing the Legendre transform of a function by the \emph{$c$-transform}, see~\cite[Section 3]{FKM11}), to obtain 
\begin{align*}
\Leb{ \coord{\csubdiff{u}{\subzero}}{x_0}\cap\coord{\target}{x_0}}= C\Leb{\csubdiff{u}{\subzero}\cap\target}\leq \Lambda^2C \Leb{\subzero},
\end{align*}
where $C>0$ depends on the cost function $c$. Now let $\ccone (\cdot)$ be the $c$-cone over the section $\subzero$ with vertex $x_0$. Then, by using~\cite[Lemma 3.1]{FK10}, we calculate
\begin{align*}
 \Leb{\csubdiffcoord{\ccone}{x_0}}&\leq C(\delta, \diam{\paren{\target}})\Leb{\csubdiffcoord{\ccone}{x_0}\cap\ball{\delta}{\pbar_\delta}}\\
 &\leq C(\delta, \diam{\paren{\target}})\Leb{ \coord{\csubdiff{u}{\subzero}}{x_0}\cap\coord{\target}{x_0}}\\
 &\leq C\Leb{\subzero},
\end{align*}
where the final constant $C$ depends on $c$, $\Lambda$, $\delta$, and $\diam{\paren{\target}}$. 
Combining this with the original proof of~\cite[Lemma 6.10]{FKM11}, we immediately obtain the claim.
\end{proof}

With all of the preceeding ingredients in hand, we are ready to prove the main theorem.
\begin{proof}[\bf { Proof of Theorem~\ref{thm: no isolated singularity in euclidean case}}]
Suppose by contradiction that $u$ has an isolated singular point $x_0\in\sourceint$.

 We begin by a localization of $u$ around $x_0$.  $\csubdiffcoord{u}{x_0}$ is convex by Corollary \ref{cor: c-convex c-subdifferential} \eqref{eq:c-sub is sub} and contains more than one point since $u$ is singular at $x_0$; thus there must exist at least one non-extremal point $\pbar_0$ of $\csubdiffcoord{u}{x_0}$. 
Let us define a family of sections around $x_0$ using $c$-affine functions with  focus $\xbar_0:= \cExp{x}{\pbar_0}$, for $h>0$ let 
\begin{align*}
 \sublevelset_h:=\curly{x\in\sourcedom\mid u(x)\leq -c(x, \xbar_0)+c(x_0, \xbar_0)+u(x_0)+h}.
\end{align*}
Notice that by Lemma~\ref{lem: contact set is singleton with MTW}, 
it holds the section is a singleton when $h=0$, i.e.  $\subzero=\curly{x_0}$.  
As a result $S_h$ can be made sufficiently small around $x_0$ for small enough  $h>0$. Thus by the assumption that $x_0$ is an isolated singularity, we may assume $h>0$ to be small enough that $\sublevelset_h\subset\sourceint$ and  $u$ is differentiable on $\sublevelset_h\setminus\curly{x_0}$. 

On the other hand, by Proposition~\ref{prop: no n dim subdifferential} we see that the affine dimension of $\subdiff{u}{x_0}$ is strictly less than $n$, and in particular $\subdiff{u}{x_0}=\subdiffbdry{u}{x_0}$. Hence by  Proposition~\ref{thm: singularity propagation}, the definition of Brenier solution, and closedness of $\target$, we see that 
\begin{align}\label{eqn: subdifferential contained in closure}
 \csubdiff{u}{x_0}=\cExp{x_0}{\subdiff{u}{x_0}}\subset \target.
\end{align}
In particular, $\xbar_0 \in \target$. Since  $u$ is differentiable on $\sublevelset_h\setminus\curly{x_0}$, \eqref{eqn: subdifferential contained in closure} 
and the definition of Brenier solution imply 
 \begin{align}\label{eq: inside target}
 \csubdiff{u}{\sublevelset_h} \subset \target .
\end{align}

 Now consider the $c$-cone  $\ccone[\sublevelset_h](x)$ over  $\sublevelset_h$ with vertex $x_0$ as in Lemma~\ref{lem: c-cones}.   From the condition $\target\cap\targetdombdry=\emptyset$, it holds 
$\xbar_0\in\targetdomint$, therefore we can
 apply Lemma~\ref{lem: c-cones}~\eqref{eqn: c-cone inclusions}
and~\eqref{eqn: interior of c-cone subdiff} to see
\begin{align*}
-Dc(x_0, \xbar_0)&\in \csubdiffcoord{\ccone[\sublevelset_h]}{x_0}^{\interior}\subset \coord{\csubdiff{u}{\sublevelset_h}}{x_0}.
\end{align*}
From \eqref{eq: inside target}, this implies $-Dc(x_0, \xbar_0)\in\coord{\target}{x_0}^{\interior}$.


However if this is the case, then one can follow the proof of~\cite[Theorem 8.3]{FKM11}, using Lemma~\ref{lem: modified aleksandrov} above (with $\delta=\dist{\paren{-Dc(x_0, \xbar_0), \coord{\target}{x_0}^{\bdry}}}>0$) in place of~\cite[Theorem 6.11]{FKM11}, to obtain that $u$ is differentiable at $x_0$; this contradicts that $x_0$ is a singular point, completing the proof.
 \end{proof}

\bibliography{mybiblioinjective}
\bibliographystyle{plain}
\end{document}